\theoremstyle{definition}
\newtheorem{definition}{Definition}[section]
\newtheorem{theorem}[definition]{Theorem}
\newtheorem*{theorem*}{Conjecture}
\newtheorem{lemma}[definition]{Lemma}
\theoremstyle{remark}
\newtheorem{remark}[definition]{Remark}
\newtheorem{example}[definition]{Example}
\newcounter{enumctr}
\newcommand{\R}{\mathbb{R}}
\newcommand{\Z}{\mathbb{Z}}
\newcommand{\C}{\mathbb{C}}
\newcommand{\capD}{{}^{\rm{\scriptscriptstyle C\!\!}}D}
\newcommand{\rlD}{{}^{\rm{\scriptscriptstyle R\text{-}L\!\!}}D}
\begin{document}


\title{\vspace*{-10mm}
Stability of scalar nonlinear fractional differential equations with linearly dominated delay}

\author{H.T.~Tuan\footnote{\tt httuan@math.ac.vn, \rm Institute of Mathematics, Vietnam Academy of Science and Technology, 18 Hoang Quoc Viet, 10307 Ha Noi, Viet Nam}
\and
S.~Siegmund\footnote{\tt stefan.siegmund@tu-dresden.de, \rm Center for Dynamics, Faculty of Mathematics, TU Dresden, 01069 Dresden, Germany}}

\maketitle

\begin{abstract}
In this paper, we study the asymptotic behavior of solutions to a scalar fractional delay differential equations around the equilibrium points. More precise, we provide conditions on the coefficients under which a linear fractional delay  equation is asymptotically stable and show that the asymptotic stability of the trivial solution is preserved under a small nonlinear Lipschitz perturbation of the fractional delay differential equation.
\end{abstract}

\section{Introduction}

Let  $A,B\in \R^{d\times d}$ and $f:\R^d\times \R^d\rightarrow \R^d$ be locally Lipschitz continuous.
The existence of solutions of the Caputo fractional differential equation
\begin{equation}\label{eq}
\capD^\alpha_{0+}x(t)=Ax(t)+Bx(t-\tau)+f(x(t),x(t-\tau))
\end{equation}
of order $\alpha\in (0,1)$ with delay $\tau>0$ and continuous initial condition $x(t)=\phi(t)$, $t\in [-\tau,0]$, has been studied in many papers. Abbas \cite{Abbas} used Krasnoselskii's fixed point theorem to show
the existence of at least one local solution. Jalilian and Jalilian \cite{Ji} proved the existence of a global solution on a finite interval by using a fixed point theorem of Leray--Schauder type. Using properties of Mittag-Leffler functions, a weighted norm, and the Banach fixed point theorem, Cong and Tuan \cite{TuanCong1} established the existence and uniqueness of global solutions under a mild Lipschitz condition.

Whenever solutions exist, it is of particular importance
to understand their asymptotic behavior. To the best of our knowledge,
up to now, there have been only very few contributions to the qualitative theory of \eqref{eq}. For $f=0$ and $B=0$, Matignon \cite{Mati} has given a well-known stability criterion based on the spectrum of the matrix $A$. Cermak,  Hornicek and Kisela \cite{Cermak_1} studied the case $f=0$, $A=0$, and obtained a necessary and sufficient condition for the stability of this system. The stability of the system when $A=0$ was discussed by Tuan and Hieu in \cite{TuanHieu}. Regarding the asymptotic behavior of solutions to \eqref{eq} for $f=0$, $d=1$, Stamova \cite{Stamova}, Cermak, Dosla, Kisela \cite{Cermak_2} and He {\em et al.} \cite{He} provided results to characterize the stability of solutions. In the case $f=0$ and $d\geq 1$, Shen and Lam \cite{Shen} considered the stability and performance analysis of the system with the assumptions $A$ is Metzler and $B$ is nonnegative.  Recently, using the properties of Caputo fractional derivatives, the Laplace transform and the Mittag-Leffler function, Thanh, Hieu and Phat \cite{Thanh} proposed sufficient conditions for exponential boundedness, asymptotic
stability and finite-time stability of \eqref{eq} for $f=0$ and $A,B$ arbitrary. However, in contrast to fractional differential equations without delays, the stability theory of delay fractional differential equations \eqref{eq} is far from being fully understood.

In this paper we answer the open question about the relationship
between the stability of the trivial solution of \eqref{eq} and that of its linearization in the scalar case $d=1$. More precise, we consider the scalar delay fractional differential equation
\begin{equation}\label{IntroEq}
\capD_{0+}^\alpha x(t)=ax(t)+bx(t-\tau)+f(x(t),x(t-\tau))
\end{equation}
where $f \colon \R^2 \rightarrow \R$ is locally Lipschitz continuous and satisfies the following conditions:
\begin{itemize}
\item [(H1)] Trivial solution: $f(0,0)=0$,
\item [(H2)] Nonlinearity: $\lim_{\varrho\to 0}\ell_f(\varrho)=0$
with
$$\ell_f(\varrho)
\coloneqq 
\sup_{\substack{x,y,\hat{x},\hat{y}\in B_{\R}(0,\varrho)\\ (x,y)\neq (\hat{x},\hat{y})}}\frac{|f(x,y)-f(\hat{x},\hat{y})|}{\max\{|x-\hat{x}|,|y-\hat{y}|\}}.$$
\end{itemize}
As shown in \cite[Theorem 2.6]{Abbas}, for every continuous initial function $\phi \colon [-\tau, 0] \rightarrow \R$, there exists a unique continuous solution $\varphi(\cdot,\phi) \colon [-\tau, t_{\max}(\phi)) \to \R$ to \eqref{IntroEq} on the maximal interval of existence $[-\tau, t_{\max}(\phi))$ which satisfies the initial condition
\begin{equation}\label{Ini_Cond}
   x(t)=\phi(t),
   \qquad t \in [-\tau,0].
\end{equation}
By (H1), equation \eqref{IntroEq} admits the trivial solution $$\varphi(\cdot,0) \colon [-\tau,\infty) \to \R,\quad t \mapsto 0.$$
For an interval $I \subseteq \R$, let $C(I;\R)$ denote the set of continuous functions $x \colon I \to \R$ with $\|x\|_\infty \coloneqq \sup_{t \in I} |x(t)|$. As in \cite[Definition 1]{TuanHieu}, the trivial solution of \eqref{IntroEq} is called
\begin{align*}
      \text{\emph{stable}} 
   & \ratio\Leftrightarrow
   \forall \varepsilon > 0 \,\exists \delta > 0 \,\forall \phi \in C([-\tau,0];\R) \text{ with } \|\phi\|_{\infty} \leq \delta \colon
\\
   & \qquad
   t_{\max}(\phi) = \infty 
   \text{ and }
   |\varphi(t, \phi)| \leq \varepsilon
   \text{ for } t \in [0,\infty),
\end{align*}
\begin{align*}
      \text{\emph{attractive}} 
   & \ratio\Leftrightarrow
   \exists \delta > 0 \,\forall \phi \in C([-\tau,0];\R) \text{ with } \|\phi\|_{\infty} \leq \delta \colon
\\
   & \qquad
   t_{\max}(\phi) = \infty 
   \text{ and }
   \lim_{t \to \infty} \varphi(t, \phi) =0,
\end{align*}
and
\begin{align*}
    \text{\emph{asymptotically stable}} 
   & \ratio\Leftrightarrow
   \text{the trivial solution is stable and attractive.}
\end{align*}
In Section 5 we provide conditions on $a$, $b$ and $f$ which imply asymptotic stability of the trivial solution of \eqref{IntroEq}. To prepare the proof of this main result, we show a variation of constants formula for \eqref{IntroEq} in Section 2, properties of the characteristic function in Section 3 and estimates for the Mittag-Leffler function in Section 4.

A reader who is familiar with fractional difference equations may skip the remainder of this section, in which we recall notation. 
Let $T > 0$ and $x \colon [0,T] \to \R$ be a measurable function in $L^1([0,T])$, i.e.\ $\int_0^T|x(s)|\;ds<\infty$. Then, the Riemann--Liouville integral of order $\alpha > 0$ is defined by
\[
I_{0+}^{\alpha}x(t):=\frac{1}{\Gamma(\alpha)}\int_0^t(t-s)^{\alpha-1}x(s)\;ds\quad \hbox{ for } t\in (0,T],
\]
where the Gamma function $\Gamma:(0,\infty)\rightarrow \R$ is defined as
\[
\Gamma(\alpha):=\int_0^\infty s^{\alpha-1}\exp(-s)\;ds,
\]
see e.g., Diethelm \cite{Kai}. The corresponding Riemann--Liouville fractional derivative of order $\alpha$ is given by
\[
\rlD_{0+}^\alpha x(t):=(D I_{0+}^{1-\alpha}x)(t) \quad \text{for almost all } t\in (0,T],
\]
where $D=\frac{d}{dt}$ is the usual derivative. The \emph{Caputo fractional derivative} $\capD_{0+}^\alpha x$ of a continuous function $x \colon [0,T] \to \R$ is defined by
\[
\capD_{0+}^\alpha x(t) \coloneqq \rlD_{0+}^\alpha(x(t)-x(0)) \quad \text{for almost all } t\in (0,T].
\]
In a normed space $(X,\| \cdot \|)$ we denote the closed ball with radius $\varrho > 0$ centered at the origin by $B_X(0,\varrho)$.

\section{Variation of constants formula}

In the case $f=0$, the linear initial value problem  \eqref{IntroEq}, \eqref{Ini_Cond}, with continuous initial function $\phi \colon [-\tau,0] \to \R$, has the solution
\[
\varphi(t,\phi)=\phi(0)E^{a,b,\tau}_{\alpha,1}(t)+b\int_{-\tau}^{t-\tau} E^{a,b,\tau}_{\alpha,\alpha}(t-\tau-s)\tilde\phi(s)ds,
\]
where 
\[
E^{a,b,\tau}_{\alpha,\beta}(t):=\mathcal{L}^{-1}\Big(\frac{s^{\alpha-\beta}}{s^\alpha-a-b\exp{(-s\tau)}}\Big)(t),
\]
$\beta=1$ or $\beta=\alpha$, $\mathcal{L}^{-1}$ is the inverse Laplace transform, $\tilde{\phi}$ is the function defined by 
\[
\tilde{\phi}(t)=\begin{cases}
\phi(t),& \text{if}\;t\in [-\tau,0],\\
0,& \text{if}\;t>0.
\end{cases}
\]
If $f$ is globally Lipschitz continuous, using the Laplace transform and the arguments as in \cite[Theorem 3]{Cermak_1}, \cite[Lemma 3.1]{TuanCong}, and \cite[Lemma 1]{TuanHieu}, we obtain the following variation of constants formula for \eqref{IntroEq}.

\begin{lemma}[Variation of constants formula for delay fractional differential equations]
Assume that $f:\R\times \R\to \R$ is Lipschitz continuous and $\phi:[-\tau,0]\to \R$ is continuous. Then, equation \eqref{IntroEq} with the initial condition \eqref{Ini_Cond} has a unique solution $\varphi(\cdot,\phi)$ on $[-\tau,\infty)$. Moreover, this solution satisfies
\begin{align}\label{var_const_for}
\notag\varphi(t,\phi)&=\phi(0)E^{a,b,\tau}_{\alpha,1}(t)+b\int_{-\tau}^{t-\tau} E^{a,b,\tau}_{\alpha,\alpha}(t-\tau-s)\tilde\phi(\tau)d\tau\\
&\hspace{0.5cm}+\int_0^t E^{a,b,\tau}_{\alpha,\alpha}(t-s)f(\varphi(s),\varphi(s-\tau))ds \quad \text{for } t > 0.
\end{align} 
\end{lemma}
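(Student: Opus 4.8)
The plan is to establish \eqref{var_const_for} by the Laplace transform method, mirroring the linear case but carrying the nonlinear term along as an inhomogeneity. First I would invoke \cite[Theorem 2.6]{Abbas} (together with the global Lipschitz assumption) to guarantee that the solution $\varphi(\cdot,\phi)$ exists and is unique on all of $[-\tau,\infty)$ and is continuous, hence of at most exponential growth on finite intervals; one checks a priori that $\varphi$ is in fact exponentially bounded on $[0,\infty)$, so that its Laplace transform $X(s) = \mathcal{L}[\varphi(\cdot,\phi)](s)$ exists for $\operatorname{Re} s$ large. Here one must be slightly careful: the argument $\varphi(s-\tau)$ in $f$ refers to values on $[-\tau,0]$ for $s \in [0,\tau]$, so I would split the delayed term using $\tilde\phi$ and write $\varphi(t-\tau) = \tilde\phi(t-\tau) + (\text{shift of }\varphi)$, as in the linear formula.

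Next I would apply the Laplace transform to \eqref{IntroEq}. Using the standard identity $\mathcal{L}[\capD_{0+}^\alpha x](s) = s^\alpha X(s) - s^{\alpha-1}x(0)$ on the left, and the delay/shift rule $\mathcal{L}[x(\cdot-\tau)\mathbf{1}_{[\tau,\infty)}](s) = e^{-s\tau}X(s)$ together with the contribution $e^{-s\tau}\mathcal{L}[\tilde\phi(\cdot-\tau)](s) = \int_{-\tau}^{0} e^{-s(u+\tau)}\phi(u)\,du$ from the initial segment on the right, I obtain
\begin{equation}\label{laplace_eq}
s^\alpha X(s) - s^{\alpha-1}\phi(0) = a X(s) + b e^{-s\tau} X(s) + b\int_{-\tau}^{0} e^{-s(u+\tau)}\phi(u)\,du + F(s),
\end{equation}
where $F(s) = \mathcal{L}[f(\varphi(\cdot),\varphi(\cdot-\tau))](s)$. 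Solving \eqref{laplace_eq} for $X(s)$ gives
\begin{equation}\label{X_solved}
X(s) = \frac{s^{\alpha-1}}{s^\alpha - a - b e^{-s\tau}}\,\phi(0) + \frac{b\int_{-\tau}^{0} e^{-s(u+\tau)}\phi(u)\,du}{s^\alpha - a - b e^{-s\tau}} + \frac{1}{s^\alpha - a - b e^{-s\tau}}\,F(s).
\end{equation}

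I would then recognize each term of \eqref{X_solved} in terms of the kernels $E^{a,b,\tau}_{\alpha,1}$ and $E^{a,b,\tau}_{\alpha,\alpha}$. By definition $\mathcal{L}[E^{a,b,\tau}_{\alpha,1}](s) = \frac{s^{\alpha-1}}{s^\alpha - a - b e^{-s\tau}}$ and $\mathcal{L}[E^{a,b,\tau}_{\alpha,\alpha}](s) = \frac{1}{s^\alpha - a - b e^{-s\tau}}$, so the first term inverts to $\phi(0)E^{a,b,\tau}_{\alpha,1}(t)$; the third term is a product of transforms, hence inverts by the convolution theorem to $\int_0^t E^{a,b,\tau}_{\alpha,\alpha}(t-s)\,f(\varphi(s),\varphi(s-\tau))\,ds$; and the middle term, after writing $\int_{-\tau}^{0} e^{-s(u+\tau)}\phi(u)\,du = \mathcal{L}[\tilde\phi(\cdot-\tau)](s)$ (up to the support bookkeeping), is again a convolution and inverts to $b\int_{-\tau}^{t-\tau} E^{a,b,\tau}_{\alpha,\alpha}(t-\tau-s)\tilde\phi(s)\,ds$, which is exactly the second term of \eqref{var_const_for}. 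Applying $\mathcal{L}^{-1}$ to \eqref{X_solved} and using injectivity of the Laplace transform on continuous functions of exponential order then yields \eqref{var_const_for} for $t>0$, and continuity of both sides extends the identity appropriately.

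The main obstacle is analytic rather than algebraic: one must justify that all the Laplace transforms in play actually converge on a common right half-plane and that the inversion is legitimate. Concretely, this requires (i) an a priori exponential bound $|\varphi(t,\phi)| \le Ce^{\gamma t}$ on $[0,\infty)$ — which follows from the global Lipschitz bound on $f$ via a Gronwall-type estimate on \eqref{IntroEq}, or alternatively from the construction in \cite{TuanCong} — so that $X(s)$ and $F(s)$ are holomorphic for $\operatorname{Re} s > \gamma$; (ii) control of the zeros of the characteristic function $s \mapsto s^\alpha - a - b e^{-s\tau}$, i.e.\ the fact that they lie in a left half-plane, so that the kernels $E^{a,b,\tau}_{\alpha,\beta}$ are well-defined and of suitable growth (this is where the references \cite{Cermak_1}, \cite{TuanCong}, \cite{TuanHieu} do the work, and I would cite their computation rather than redo it); and (iii) the uniqueness statement, which is immediate from \cite[Theorem 2.6]{Abbas} under the global Lipschitz hypothesis. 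Once these points are in place the identity \eqref{var_const_for} drops out of \eqref{X_solved} by term-by-term inversion.
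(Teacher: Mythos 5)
Your proposal is correct and follows essentially the same route as the paper: global existence, uniqueness and exponential boundedness are taken from the literature (the paper cites \cite[Corollary 3.2 and Theorem 4.1]{TuanCong1} where you cite \cite{Abbas} plus a Gronwall bound), and the formula is then derived by taking the Laplace transform, solving for $X(s)$, and inverting term by term via the convolution theorem. The analytic caveats you flag (common half-plane of convergence, location of the zeros of $s^\alpha-a-b e^{-s\tau}$) are exactly the points the paper disposes of by citation, so there is no substantive difference.
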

\begin{proof}
From \cite[Corollary 3.2]{TuanCong1}, we see that for any continuous initial data $\phi$, equation \eqref{IntroEq} with the initial condition $x(t)=\phi(t)$ on $[-\tau,0]$ has a unique solution on $[-\tau,\infty)$. Moreover, this solution is exponentially bounded, see \cite[Theorem 4.1]{TuanCong1}. Taking the Laplace transform on both sides of \eqref{IntroEq} and using the facts that
\begin{align*}
\mathcal{L}(\capD^\alpha_{0+}x(t))(s)&=s^\alpha \mathcal{L}(x(t))(s)-s^{\alpha-1}x(0)=s^\alpha X(s)-s^{\alpha-1}\phi(0),\\
\mathcal{L}(x(t-\tau)(s)&=\exp{(-\tau s)}\mathcal{L}(x(t))(s)+\exp{(-\tau s)}\int_{-\tau}^0 \exp{(-s u)}x(u)\;du\\
&=\exp{(-\tau s)}X(s)+\exp{(-\tau s)}\int_{-\tau}^0 \exp{(-s u)}\phi(u)\;du,
\end{align*}
for $s\in \{z\in \C:\Re z>c\}$, $c$ large enough, we get
\begin{align}\label{v_1}
\notag X(s)&=\frac{s^{\alpha-1}\phi(0)}{s^\alpha-a-b\exp{(-\tau s)}}+\frac{b\exp{(-\tau s)}\int_{-\tau}^0 \exp{(-su)}\phi(u)\;du}{s^\alpha-a-b\exp{(-\tau s)}}\\
&\quad {}+\frac{F(s)}{s^\alpha-a-b\exp{(-\tau s)}},
\end{align}
where $X(s)\coloneqq \mathcal{L}(x(t))(s)$ and $F(s)=\mathcal{L}(f(x(t),x(t-\tau)))(s)$. Applying the inverse Laplace transform on both sides of \eqref{v_1}, we obtain
\begin{align}\label{v_2}
\notag x(t)&=E^{a,b,\tau}_{\alpha,1}(t)\phi(0)+b\int_{-\tau}^{t-\tau} E^{a,b\,\tau}_{\alpha,\alpha}(t-\tau-s)\tilde\phi(s)\;ds\\
&\quad{}+\int_0^t E^{a,b,\tau}_{\alpha,\alpha}(t-s)f(x(s),x(s-\tau))\;ds \quad \text{for } t > 0.
\end{align}
Here, to obtain \eqref{v_2}, we used
\begin{align*}
\mathcal{L}^{-1}\left( \frac{F(s)}{s^\alpha-a-b\exp{(-\tau s)}}\right)(t)&=\mathcal{L}^{-1}\left( F(s)\mathcal{L}(E^{a,b,\tau}_{\alpha,\alpha}(t))(s)\right)(t)\\
&=\mathcal{L}^{-1}\left(\mathcal{L}(f(x(\cdot),x(\cdot-\tau))\ast E^{a,b,\tau}_{\alpha,\alpha}(\cdot)(t))(s)\right)(t)\\
&=\int_0^t E^{a,b,\tau}_{\alpha,\alpha}(t-s)f(x(s),x(s-\tau))\;ds,
\end{align*}
and
\begin{align*}
&\mathcal{L}^{-1}\left(\frac{\exp{(-\tau s)}\int_{-\tau}^0 \exp{(-su)}\phi(u)\;du}{s^\alpha-a-b\exp{(-\tau s)}}\right)(t)\\
&\hspace{2cm}=\mathcal{L}^{-1}\left(\mathcal{L}(E^{a,b\,\tau}_{\alpha,\alpha}(t))(s)\mathcal{L}(\tilde{\phi}(t-\tau))(s)\right)(t)\\
&\hspace{2cm}=\mathcal{L}^{-1}\left(\mathcal{L}(E^{a,b\,\tau}_{\alpha,\alpha}(\cdot)\ast \tilde{\phi}(\cdot-\tau)(t) )(s)\right)(t)\\
&\hspace{2cm}=\int_{-\tau}^{t-\tau} E^{a,b\,\tau}_{\alpha,\alpha}(t-\tau-s)\tilde\phi(s)\;ds,
\end{align*}
where $\ast$ denotes the convolution operator.
\end{proof}

\section{Properties of the characteristic function}

To derive the asymptotic behavior of the solutions to \eqref{IntroEq} from \eqref{var_const_for}, we need to study the function $E^{a,b,\tau}_{\alpha,\beta}(t)$. First, we recall some facts concerning the zeros of the characteristic function $Q(s):=s^\alpha-a-b\exp{(-s\tau)}$.
\begin{lemma}\label{zero_distr}
Let $\alpha\in (0,1)$, $a,b\in \R$, $\tau>0$. Then the following statements hold.
\begin{itemize}
\item[(i)] If $a+b\geq 0$, then the equation $Q(s)=0$ has at least one nonnegative real root.
\item[(ii)] If $s$ is a zero of $Q$, then its complex conjugate $\bar{s}$ also satisfies $Q(\bar{s})=0$.
\item[(iii)] Let $0<\omega<\pi$. Then the equation $Q(s)=0$ has at most finitely many roots $s$ such that $|\arg(s)|\leq \omega$.
\item[(iv)] The equation $Q(s)=0$ has no more than a finite number of roots in any vertical strip of the complex plane given by
\[
\{z\in \C \colon \rho_1\leq \Re{(z)}\leq \rho_2\}.
\]
\item[(v)] There does not exist an $s\in \C\setminus\{0\}$ satisfying $Q(s)=Q'(s)=Q''(s)=Q'''(s)=0$. 
\end{itemize}
\end{lemma}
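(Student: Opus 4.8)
The five assertions are nearly independent, so I would dispatch them one at a time, using the symmetry in (ii) as a convenience where it helps. Parts (i) and (ii) are soft. For (i) I would restrict $Q$ to the ray $[0,\infty)$: there it is continuous with $Q(0)=-(a+b)\le 0$, while $Q(s)=s^\alpha-a-b\exp(-s\tau)\to+\infty$ as $s\to+\infty$ since $s^\alpha\to\infty$ and $a+b\exp(-s\tau)\to a$; the intermediate value theorem then yields a nonnegative real root (namely $s=0$ when $a+b=0$). For (ii) I would invoke Schwarz reflection: with the principal branch of $s\mapsto s^\alpha$ cut along $(-\infty,0]$ one has $\overline{s^\alpha}=\bar s^{\,\alpha}$ off the cut and $\overline{\exp(-s\tau)}=\exp(-\bar s\tau)$ everywhere, so, since $a,b,\tau$ are real, $\overline{Q(s)}=Q(\bar s)$ on $\C\setminus(-\infty,0]$; hence $Q(s)=0$ implies $Q(\bar s)=\overline{Q(s)}=0$. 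Positive real roots and the possible root $s=0$ are self-conjugate, and there are no roots on $(-\infty,0)$ at all because there $\Im Q(s)=|s|^\alpha\sin(\pi\alpha)\ne0$.

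Parts (iv) and the easy range of (iii) rest on two facts: $Q$ is holomorphic and not identically zero on $\C\setminus(-\infty,0]$, so its zeros are isolated and finite in number on every compact subset; and an a priori modulus bound. For (iv), on a strip $\rho_1\le\Re s\le\rho_2$ we have $|b\exp(-s\tau)|=|b|e^{-\tau\Re s}\le|b|e^{-\tau\rho_1}$, so a root satisfies $|s|^\alpha=|a+b\exp(-s\tau)|\le|a|+|b|e^{-\tau\rho_1}$, i.e.\ $|s|$ is bounded; together with the first fact this forces finitely many roots in the strip (the point $s=0$ is treated separately, and the branch cut is harmless since $Q$ does not vanish on $(-\infty,0]$ apart from possibly the origin, and no zeros accumulate there). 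For (iii), exactly the same argument works when $\omega\le\pi/2$: a root with $|\arg s|\le\omega$ has $\Re s=|s|\cos(\arg s)\ge|s|\cos\omega\ge0$, so $|b\exp(-s\tau)|\le|b|$ and $|s|^\alpha\le|a|+|b|$, bounding $|s|$.

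The main obstacle is (iii) in the range $\omega\in(\pi/2,\pi)$, where $\cos\omega<0$ no longer keeps $\Re s\ge0$. Here I would locate the large roots: for $b\ne0$ (the case $b=0$ is trivial) and $|s|$ large, $|s|^\alpha=|a+b\exp(-s\tau)|$ yields the two-sided bound $|b|e^{-\tau\Re s}-|a|\le|s|^\alpha\le|a|+|b|e^{-\tau\Re s}$, which forces $\Re s\to-\infty$ whenever $|s|\to\infty$ and pins $|\Re s|$ to size $O(\log|s|)$; hence $\cos(\arg s)=\Re s/|s|\to0$, so along any sequence of roots with $|s|\to\infty$ one has $\arg s\to\pm\pi/2$. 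Thus the roots accumulate only toward the two directions $\pm\pi/2$ — which simultaneously gives (iii) for $\omega\le\pi/2$ (a putative infinite family inside such a sector would have to have $\arg s\to\pm\pi/2$, contradicting $|\arg s|\le\omega\le\pi/2$ together with $\Re s\to-\infty$) and shows that $\omega=\pi/2$ is exactly the threshold. Making the $O(\log|s|)$ estimate and the "accumulation only at $\pm\pi/2$" statement rigorous via such a contradiction argument is where the real work sits, and I would also verify against this asymptotics in which precise form the sector statement is actually needed later.

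Part (v) is short. Differentiating, for $s\ne0$, $Q'(s)=\alpha s^{\alpha-1}+b\tau\exp(-s\tau)$, $Q''(s)=\alpha(\alpha-1)s^{\alpha-2}-b\tau^2\exp(-s\tau)$, and $Q'''(s)=\alpha(\alpha-1)(\alpha-2)s^{\alpha-3}+b\tau^3\exp(-s\tau)$. I would eliminate the exponential between consecutive derivatives, getting the purely algebraic identities $\tau Q'(s)+Q''(s)=\alpha s^{\alpha-2}(\tau s+\alpha-1)$ and $\tau Q''(s)+Q'''(s)=\alpha(\alpha-1)s^{\alpha-3}(\tau s+\alpha-2)$. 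If some $s\ne0$ satisfied $Q'(s)=Q''(s)=Q'''(s)=0$, the first identity would give $\tau s=1-\alpha$ and the second $\tau s=2-\alpha$, which is impossible; a fortiori no nonzero $s$ annihilates $Q,Q',Q'',Q'''$ simultaneously, which is (v). The only care needed is that the power-derivative formulas are taken on the principal branch away from $0$, and real negative $s$ is excluded beforehand because $Q$ does not vanish there; the weight of the lemma is entirely in (iii).
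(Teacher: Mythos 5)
Your treatment of parts (iv) and (v) coincides with the paper's own proofs. For (iv) both arguments bound $|b\exp(-s\tau)|$ on the strip by $|b|e^{-\rho_1\tau}$, conclude that zeros have bounded modulus, and finish by analyticity on the remaining compact set. For (v) you eliminate $b\exp(-\tau s)$ between consecutive derivative equations exactly as the paper does, obtaining the incompatible conclusions $\tau s=1-\alpha$ (from $Q'=Q''=0$) and $\tau s=2-\alpha$ (from $Q''=Q'''=0$). For parts (i)--(iii) the paper offers no proof at all, only the citation to \cite[Proposition 2]{Cermak_2}; your intermediate-value argument for (i) and your Schwarz-reflection argument for (ii) (including the observation that $\Im Q<0$ nowhere vanishes on $(-\infty,0)$) are correct and supply what the paper omits.

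The genuine gap is part (iii) in the range $\omega\in(\pi/2,\pi)$, which you explicitly leave open, and your own (correct) asymptotics show why it cannot be closed. You prove that any unbounded sequence of zeros has $\Re s\to-\infty$ with $|\Re s|=O(\log|s|)$, hence $|\arg s|\to\pi/2$ from above. But when $b\neq 0$ the function $Q$ has \emph{infinitely many} zeros: rewriting $Q(s)=0$ as $-\tau s=\mathrm{Log}\bigl((s^\alpha-a)/b\bigr)+2k\pi i$ and applying a contraction argument near $s=-2k\pi i/\tau$ produces, for every large $|k|$, a zero $s_k$ with $\Im s_k\approx -2k\pi/\tau$ and $\Re s_k\approx-\tfrac{\alpha}{\tau}\log|s_k|$. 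These zeros satisfy $|\arg s_k|\downarrow\pi/2$, so for every fixed $\omega>\pi/2$ all but finitely many of them lie in the sector $|\arg s|\leq\omega$; the claim is therefore false in that range, not merely difficult. Your closing instinct --- to check ``in which precise form the sector statement is actually needed later'' --- is exactly the right one: the paper invokes (iii) with $\omega=\frac{\pi}{2}+\delta$ at the start of the proof of Lemma \ref{est_lemma}, i.e.\ precisely in the problematic range, and neither this paper nor the cited source gives an argument that survives the asymptotics you derived. The statement that is both true and sufficient for the contour argument is the half-plane version: combining part (iv) with your observation that $\Re s\to-\infty$ along any unbounded sequence of zeros shows that every half-plane $\{\Re s\geq\rho\}$ contains only finitely many zeros, and the inversion contour should be organized around that fact rather than around a fixed sector.
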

\begin{proof}
For the proof of $\textup{(i)}$--$\textup{(iii)}$, see \cite[Proposition 2]{Cermak_2}. 

\noindent (iv) Assume that $s=x+iy\in \{z\in \C \colon \rho_1\leq \Re{(z)}\leq \rho_2\}$. Choosing $T_0>0$ such that $T^\alpha_0>|a|+|b|\exp{(-\rho_1\tau)}$. Then, for any $s=x+iy\in \{z\in \C \colon \rho_1\leq \Re{(z)}\leq \rho_2\}\cap \{z\in\C \colon |\Im{(z)}|\geq T_0\}$, we have 
\[
|s^\alpha|>|a|+|b|\;|\exp{(-s\tau)}|,
\]
which implies that the equation $Q(s)=0$ has no solution in the set $\{z\in \C \colon \rho_1\leq \Re{(z)}\leq \rho_2\}\cap \{z\in\C \colon |\Im(z)|\geq T_0\}$. On the other hand, the function $Q(s)$ has only finitely many roots in the compact set $\{z=x+iy\in \C \colon \rho_1\leq x\leq \rho_2,\;-T_0\leq y\leq T_0\}$ (the function $Q$ is analytic in this domain). Hence, there exist at most finitely many roots of $Q(s)$ in $\{z\in \C \colon \rho_1\leq \Re{(z)}\leq \rho_2\}.$ 

\noindent (v) Now we assume that there exists $s\in \C\setminus\{0\}$ such that $Q(s)=Q'(s)=Q''(s)=Q'''(s)=0$. Using $Q'(s)=Q''(s)=0$, a direct computation shows
\begin{equation*}
b\exp{(-\tau s)}=-\frac{\alpha s^{\alpha-1}}{\tau}=\frac{\alpha (\alpha-1)s^{\alpha-2}}{\tau^2},
\end{equation*}
which implies 
\begin{equation}\label{stp_1}
s=\frac{1-\alpha}{\tau}.
\end{equation}
Similarly, from the equality $Q''(s)=Q'''(s)=0$, we have
\begin{equation*}
b\exp{(-\tau s)}=\frac{\alpha (\alpha-1)s^{\alpha-2}}{\tau^2}=-\frac{\alpha(\alpha-1)(\alpha-2)s^{\alpha-3}}{\tau^3},
\end{equation*}
which implies 
\begin{equation*}
s=\frac{2-\alpha}{\tau},
\end{equation*}
a contradiction to \eqref{stp_1}. The proof of $\textup{(iv)}$ completes.
\end{proof}
The following lemma provides a condition which ensures that all solutions $s \in \C$ of $Q(s)=0$ satisfy $\Re (s) < 0$. It is stated without proof in \cite[Proposition 4]{Cermak_2}, we give a simple and geometric proof for completeness.
\begin{lemma}\label{key_lemma}
Let $\alpha\in (0,1)$, $a,b\in \R$ and $\tau>0$. If $a\leq b<-a$, then the equation $Q(s)=0$ has no root with non-negative real part.
\end{lemma}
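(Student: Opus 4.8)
The plan is to compare the real parts of the two sides of the equation $Q(s)=0$, rewritten as $s^\alpha=a+b\exp(-s\tau)$, and to show that these real parts can never coincide when $\Re(s)\ge 0$. I would begin with a remark on the coefficients: the hypothesis $a\le b<-a$ forces $a<0$ (if $a\ge 0$, then $b\ge a\ge -a$ contradicts $b<-a$), and then $b\in[a,-a)=[-|a|,|a|)$, so that $|b|\le -a=|a|$. The point $s=0$ is disposed of directly, since $Q(0)=-(a+b)>0$ because $b<-a$. Hence it suffices to treat $s=x+iy$ with $x\ge 0$ and $s\neq 0$.

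\emph{The term $s^\alpha$.} Here $s^\alpha$ denotes the principal branch, which is defined and analytic on $\C\setminus(-\infty,0]$, in particular on the punctured closed right half-plane. Writing $s=|s|\exp(i\theta)$ with $\theta=\arg(s)\in[-\pi/2,\pi/2]$ (this is exactly where $x\ge 0$ is used), I get $s^\alpha=|s|^\alpha\exp(i\alpha\theta)$ and hence $\Re(s^\alpha)=|s|^\alpha\cos(\alpha\theta)$. Since $0<\alpha<1$, the angle $\alpha\theta$ lies in $[-\alpha\pi/2,\alpha\pi/2]\subset(-\pi/2,\pi/2)$, so $\cos(\alpha\theta)>0$ and therefore $\Re(s^\alpha)>0$; geometrically, $s^\alpha$ stays in the open sector of half-angle $\alpha\pi/2<\pi/2$ about the positive real axis.

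\emph{The term $a+b\exp(-s\tau)$.} Since $x\ge 0$, we have $|\exp(-s\tau)|=\exp(-x\tau)\le 1$, hence $|b\exp(-s\tau)|\le|b|\le -a=|a|$. Thus $a+b\exp(-s\tau)$ lies in the closed disk of radius $|a|$ centered at $a$; the rightmost point of that disk is $a+|a|=0$, so the disk is contained in $\{z\in\C:\Re(z)\le 0\}$, and in particular $\Re(a+b\exp(-s\tau))\le 0$. Comparing with the previous step, $\Re(s^\alpha)>0\ge\Re(a+b\exp(-s\tau))$, so $s^\alpha\neq a+b\exp(-s\tau)$, i.e. $Q(s)\neq 0$. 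Together with $Q(0)\neq 0$, this proves the lemma.

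This argument is short, and the only delicate points are bookkeeping: one must confirm that the principal branch of $z\mapsto z^\alpha$ genuinely applies on the whole region under consideration (it does, since the closed right half-plane with the origin removed is disjoint from the branch cut $(-\infty,0]$), and one must keep the inequality $\Re(s^\alpha)>0$ strict, since this is precisely what excludes the borderline configuration in which $a+b\exp(-s\tau)$ reaches the imaginary axis (this can occur only when $b=a$, $x=0$ and $\cos(y\tau)=-1$, in which case the value is $0$, still different from $s^\alpha$, which then has strictly positive real part). No compactness or Rouch\'e-type argument is needed, so I do not anticipate any real obstacle beyond this careful handling of the boundary case.
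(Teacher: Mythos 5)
Your proof is correct and follows essentially the same geometric idea as the paper: the image of the closed right half-plane under $s\mapsto s^\alpha$ is a sector of half-angle $\alpha\pi/2$ about the positive real axis, while $a+b\exp(-\tau s)$ stays in the disk of radius $|b|\le|a|$ centered at $a<0$, and these sets can only meet at the origin, which is not a root. Your real-part formulation ($\Re(s^\alpha)>0$ for $s\ne 0$ versus $\Re(a+b\exp(-\tau s))\le 0$) is a slightly cleaner packaging that absorbs the borderline case $b=a$, which the paper handles by a separate short case analysis.
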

\begin{proof}
Define $C_+\coloneqq\{z\in \C:\Re (z)\geq 0\}$ and the functions $w_1\colon C_+\to \C$, $w_1(s)=a+b\exp{(-\tau s)}$ and $w_2 \colon C_+\to \C$, $w_2(s)=s^\alpha$. It is obvious that $D_2\coloneqq w_2(C_+)=\{s\in \C\colon|\arg(s)|\leq \frac{\alpha \pi}{2}\}$. On the other hand, for any $s\in C_+$,
\[
|w_1(s)-a|\leq |b|.
\]
Hence, $D_1\coloneqq w_1(C_+)=\{s\in \C \colon |s-a|\leq |b|\}$. This shows that if $a<0$ and $|b|<|a|$ then $D_1\cap D_2=\emptyset$ (see Figure 1), that is, there does not exist $s\in C_+$ such that $w_1(s)=w_2(s)$. Now we consider $a<0$ and $a=b$. In this case $D_1 \cap  D_2=\{0\}$. Assume that there is a $s\in C_+$ such that $w_1(s)=w_2(s)$. Then $w_2(s)=0$ which implies $s=0$. However, $w_1(0)\ne 0$, a contradiction. Combining the arguments as above, we conclude that if $a\leq b<|a|$ then the equation $Q(s)=0$ has no solution with non-negative real part.
\end{proof}
\begin{figure}
\centering
`\includegraphics[width=0.85\textwidth]{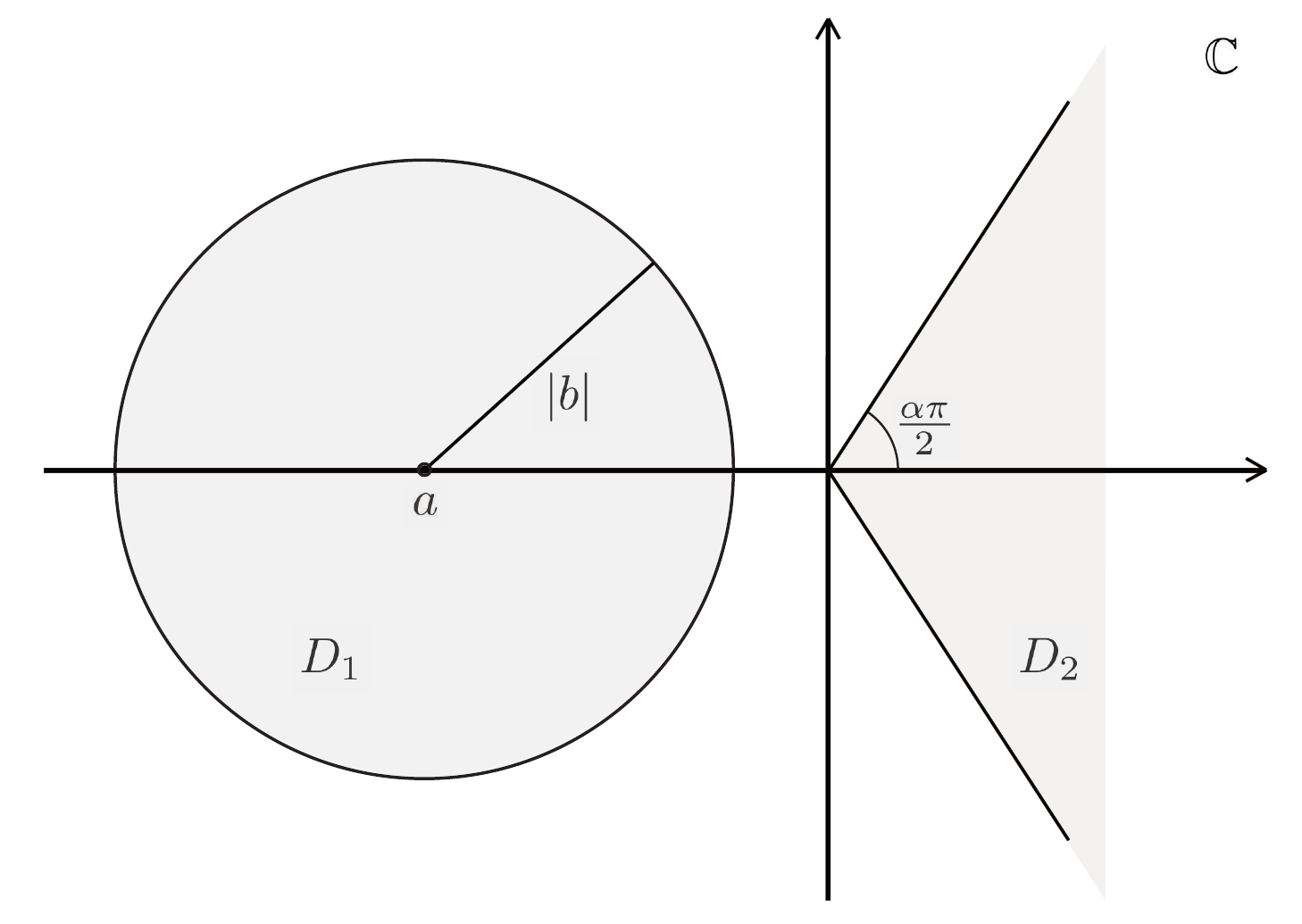}
\caption{The domains $D_1$ and $D_2$ in the case $a<0,\;|b|<|a|$.}
\end{figure}

\section{Asymptotics of Mittag-Leffler functions}

In the following lemma, we provide some estimates involving Mittag-Leffler functions $E^{a,b,\tau}_{\alpha,\beta}(t)$ under assumptions which ensure that all roots of the equation $Q(s)=0$ have negative real parts.
\begin{lemma}\label{est_lemma}
Let $\alpha\in (0,1)$, $\tau>0$ and $a,b\in \R$ satisfying $a\leq b<-a$. Then, there exists a constant $C>0$ such that the following estimates hold:
\begin{itemize}
\item[(i)] $|E^{a,b,\tau}_{\alpha,1}(t)|\leq \frac{C}{t^{\alpha}}$ for all $t\geq 1$.
\item[(ii)] $|E^{a,b,\tau}_{\alpha,\alpha}(t)|\leq \frac{C}{t^{\alpha+1}}$ for all $t\geq 1$.
\item[(iii)] $\int_0^\infty |E^{a,b,\tau}_{\alpha,\alpha}(s)|ds\leq C$.
\end{itemize}
\end{lemma}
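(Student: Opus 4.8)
The plan is to work from the inverse Laplace transform
$$E^{a,b,\tau}_{\alpha,\beta}(t)=\frac{1}{2\pi i}\int_{c-i\infty}^{c+i\infty}e^{st}\,\frac{s^{\alpha-\beta}}{Q(s)}\,ds ,$$
valid for $c>0$ large because $E^{a,b,\tau}_{\alpha,\beta}$ is (exponentially bounded and) the inverse transform of $s^{\alpha-\beta}/Q(s)$, and to deform the vertical line onto a Hankel-type contour $\cH$ encircling the branch cut $(-\infty,0]$ of $s^{\alpha-\beta}$: a circle $|s|=\varepsilon$ together with two rays running just above and below $(-\infty,-\varepsilon]$. Note $Q(0)=-(a+b)\neq 0$ since $a+b<0$ under the hypothesis $a\le b<-a$, so $0$ is only a branch point; and a short computation (the imaginary part of $Q(-r)$ is $r^\alpha\sin(\alpha\pi)\neq 0$) shows $Q$ has no zero on $(-\infty,0)$, so $\cH$ avoids all singularities. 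Since $s^{\alpha-\beta}/Q(s)=O(|s|^{-\beta})$ on vertical lines, I would first integrate by parts $m$ times to rewrite the Bromwich integral as $\tfrac{(-1)^m}{2\pi i\,t^m}\int e^{st}(s^{\alpha-\beta}/Q(s))^{(m)}\,ds$ with $(s^{\alpha-\beta}/Q(s))^{(m)}=O(|s|^{-\beta-m})$; taking $m$ with $\beta+m>1$ makes the connecting arcs at infinity vanish and legitimises the deformation, which by the residue theorem gives
$$E^{a,b,\tau}_{\alpha,\beta}(t)=\sum_{j}\operatorname*{Res}_{s=s_j}\!\Big(e^{st}\tfrac{s^{\alpha-\beta}}{Q(s)}\Big)+\frac{1}{2\pi i}\int_{\cH}e^{st}\,\tfrac{s^{\alpha-\beta}}{Q(s)}\,ds ,$$
the sum running over all zeros $s_j$ of $Q$.

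The next step is to bound the residue sum. By Lemma~\ref{key_lemma} every $s_j$ has $\Re s_j<0$, and by Lemma~\ref{zero_distr}(v) every $s_j$ has multiplicity at most $3$, so each residue is a polynomial of degree $\le 2$ in $t$ times $e^{s_jt}$. From $Q(s_j)=0$ one gets $b\,e^{-s_j\tau}=s_j^\alpha-a$, hence $Q'(s_j)=\alpha s_j^{\alpha-1}+\tau(s_j^\alpha-a)$ so $|Q'(s_j)|\ge\tfrac{\tau}{2}|s_j|^\alpha$ for $|s_j|$ large, while $|s_j|^\alpha=|a+b\,e^{-s_j\tau}|$ forces $\Re s_j\le-\tfrac{\alpha}{\tau}\log|s_j|+C$ in the same regime; together with the distribution of the zeros from Lemma~\ref{zero_distr} (only finitely many in any bounded set, and controlled density along the imaginary directions) this makes $\sum_j|\operatorname{Res}_{s_j}(\cdot)|$ converge for $t$ large and be dominated by $Ce^{-\lambda t}$ for a suitable $\lambda>0$ once $t\ge t_\ast$. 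In particular the residue contribution is $O(t^{-\alpha})$ and $O(t^{-\alpha-1})$ for $t\ge t_\ast$.

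For the contour integral over $\cH$, on the two rays $|e^{st}|=e^{-rt}$ and, since $|b\,e^{-s\tau}|=|b|e^{r\tau}$ eventually dominates $s^\alpha$, the integrand is $O(r^{\alpha-\beta}e^{-r(t+\tau)})$ for large $r$ and bounded near $r=\varepsilon$, so that part is $O(e^{-\varepsilon t})$. On the circle I would insert the expansion at $s=0$: writing $Q(s)=-(a+b)+s^\alpha+b\tau s+\cdots$ yields $s^{\alpha-\beta}/Q(s)=\sum_k c_k s^{\mu_k}+R(s)$ with $\mu_0<\mu_1<\cdots$ and $R(s)=O(s^{\mu_\ast})$ for $\mu_\ast$ as large as desired, where $\mu_0=\alpha-1$, $c_0=-1/(a+b)$ when $\beta=1$, and $\mu_0=0$, $\mu_1=\alpha$, $c_1=-1/(a+b)^2$ when $\beta=\alpha$. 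Applying Hankel's formula $\tfrac{1}{2\pi i}\int_{\cH}e^{st}s^{\mu}\,ds=t^{-\mu-1}/\Gamma(-\mu)$, with $1/\Gamma$ vanishing at non-positive integers (which annihilates the contributions of $s^0,s^1,\dots$), the circle contributes $-\tfrac{1}{a+b}\cdot\tfrac{t^{-\alpha}}{\Gamma(1-\alpha)}+O(t^{-2\alpha})$ for $\beta=1$ and $-\tfrac{1}{(a+b)^2}\cdot\tfrac{t^{-\alpha-1}}{\Gamma(-\alpha)}+O(t^{-2\alpha-1})$ for $\beta=\alpha$, while $R$ contributes $O(t^{-\mu_\ast-1})$. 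Hence $|E^{a,b,\tau}_{\alpha,1}(t)|\le Ct^{-\alpha}$ and $|E^{a,b,\tau}_{\alpha,\alpha}(t)|\le Ct^{-\alpha-1}$ for $t\ge t_\ast$; for $1\le t\le t_\ast$ the same bounds follow from continuity of $E^{a,b,\tau}_{\alpha,\beta}$ on that compact interval, which proves (i) and (ii). Finally, (iii) follows from (ii) on $[1,\infty)$ together with the behaviour $E^{a,b,\tau}_{\alpha,\alpha}(t)\sim t^{\alpha-1}/\Gamma(\alpha)$ as $t\to 0^+$, read off from $1/Q(s)=s^{-\alpha}+O(s^{-2\alpha})$ as $|s|\to\infty$, which is integrable near $0$.

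The hard part will be the deformation of the Bromwich line, since $Q$ has infinitely many zeros — they accumulate near the two imaginary-axis directions with real parts going to $-\infty$ only logarithmically — so all of them are encircled, and one must show that the resulting residue series both converges (for $t$ large) and decays fast enough to be negligible against the claimed algebraic rates. This is exactly where Lemma~\ref{key_lemma} ($\Re s_j<0$), the explicit rate $\Re s_j\lesssim-\tfrac{\alpha}{\tau}\log|s_j|$ coming from the characteristic equation, the distributional information in Lemma~\ref{zero_distr}, and the multiplicity bound in Lemma~\ref{zero_distr}(v) are used; everything after that is the classical Watson-lemma computation for Mittag–Leffler asymptotics.
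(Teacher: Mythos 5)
Your overall architecture (Bromwich integral, contour deformation, residues plus a remainder integral, a Watson-type expansion at $s=0$ in which the vanishing of $1/\Gamma$ at non-positive integers kills the $s^0$ term when $\beta=\alpha$, and deducing (iii) from (ii) together with the integrable singularity $t^{\alpha-1}$ at $0$) is the right family of argument and in several places parallels the paper's computation. But you choose a genuinely different, and much more dangerous, final contour: a Hankel loop hugging $(-\infty,0]$, which places \emph{all} of the infinitely many zeros of $Q$ between the Bromwich line and your contour. The paper instead deforms only to the sector contour $\gamma(R,\tfrac{\pi}{2}+\delta)$ with $\delta$ small (citing \cite{Cermak_2} for that representation), chosen via Lemma~\ref{zero_distr}(iii) so that only finitely many zeros $s_1,\dots,s_N$ are swept when the contour is subsequently shrunk to $\gamma(\varepsilon/t,\tfrac{\pi}{2}+\delta)$; the residue contribution is then a finite sum $\sum_{i=1}^N p_i(t)e^{s_i t}$ with $\deg p_i\le 2$ and $\Re s_i<0$ (Lemmas~\ref{key_lemma} and~\ref{zero_distr}(v)), and the small-contour integral $I^1$ is estimated by the substitution $s=u^{1/\alpha}/t$ --- for $\beta=\alpha$ via the decomposition $1/Q=-\tfrac1b e^{\tau s}+\tfrac1b(s^\alpha-a)e^{\tau s}/Q$ and the identity $\frac{1}{2\pi i}\int_\gamma e^{(t+\tau)s}\,ds=\bigl(1/\Gamma\bigr)(0)=0$, which is exactly your ``$1/\Gamma$ annihilates $s^0$'' cancellation in disguise.

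The genuine gap is the justification of your deformation. The zeros of $Q$ satisfy $\Re s_j\sim-\tfrac{\alpha}{\tau}\log|s_j|$ with $|\Im s_j|\sim 2\pi j/\tau$, so the connecting arcs at radius $\rho\to\infty$ must traverse the left half-plane region where $e^{-s\tau}$ blows up and where they pass arbitrarily close to infinitely many poles of $1/Q$. The estimate $(s^{\alpha-\beta}/Q(s))^{(m)}=O(|s|^{-\beta-m})$, which is what is supposed to make those arcs vanish, holds only where $e^{-s\tau}$ is bounded and $Q$ is bounded away from zero --- essentially the closed right half-plane --- and fails precisely where you need it. Repairing this requires interlacing the arc radii with the zeros, proving a uniform lower bound on $|Q|$ on those arcs, and summing an infinite residue series whose terms are of size $\asymp|s_j|^{-\beta}e^{t\Re s_j}\asymp j^{-\beta-\alpha t/\tau}$, hence not even summable for $\beta=\alpha$ when $t<\tau(1-\alpha)/\alpha$; so the claimed representation is not available on all of $[1,\infty)$. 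You flag this as ``the hard part'' and defer it, but it is the crux, and as written the proof is incomplete; adopting the angle-$(\tfrac{\pi}{2}+\delta)$ contour avoids crossing more than finitely many zeros and the difficulty disappears. (Your continuity patch on $[1,t_*]$ is fine, and so is using $E^{a,b,\tau}_{\alpha,\alpha}(t)\sim t^{\alpha-1}/\Gamma(\alpha)$ near $0$ for (iii), though the latter deserves a rigorous line obtained by inverting $1/Q(s)=s^{-\alpha}+O(s^{-2\alpha})$; the paper instead bounds $\int_0^1|E^{a,b,\tau}_{\alpha,\alpha}(s)|\,ds$ directly on the three pieces of $\gamma(R,\tfrac{\pi}{2}+\delta)$.)
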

\begin{proof}
In the case $b=0$, the function $E^{a,0,\tau}_{\alpha,\beta}(t)$ equals $t^{\beta-1}E_{\alpha,\beta}(at^\alpha)$ and this lemma is proved in \cite[Theorems 2 \& 3]{TuanCongSon}. Hence, we only discuss the remaining case $a\leq b<-a,\; b\ne 0$.

We define for $\mu>0$ and $\theta\in (0,\pi)$ an oriented contour $\gamma(\mu,\theta)$ formed by three segments:
\begin{itemize}
\item $\{s \in \C \colon \arg{(s)}=-\theta, |s|\geq \mu\}$,
\item $\{s \in \C \colon -\theta\leq \arg{(s)}\leq \theta, |s|=\mu\}$,
\item $\{s \in \C \colon \arg{(s)}=\theta, |s|\geq \mu\}$,
\end{itemize}
\begin{figure}
\centering
\includegraphics[width=0.70\textwidth]{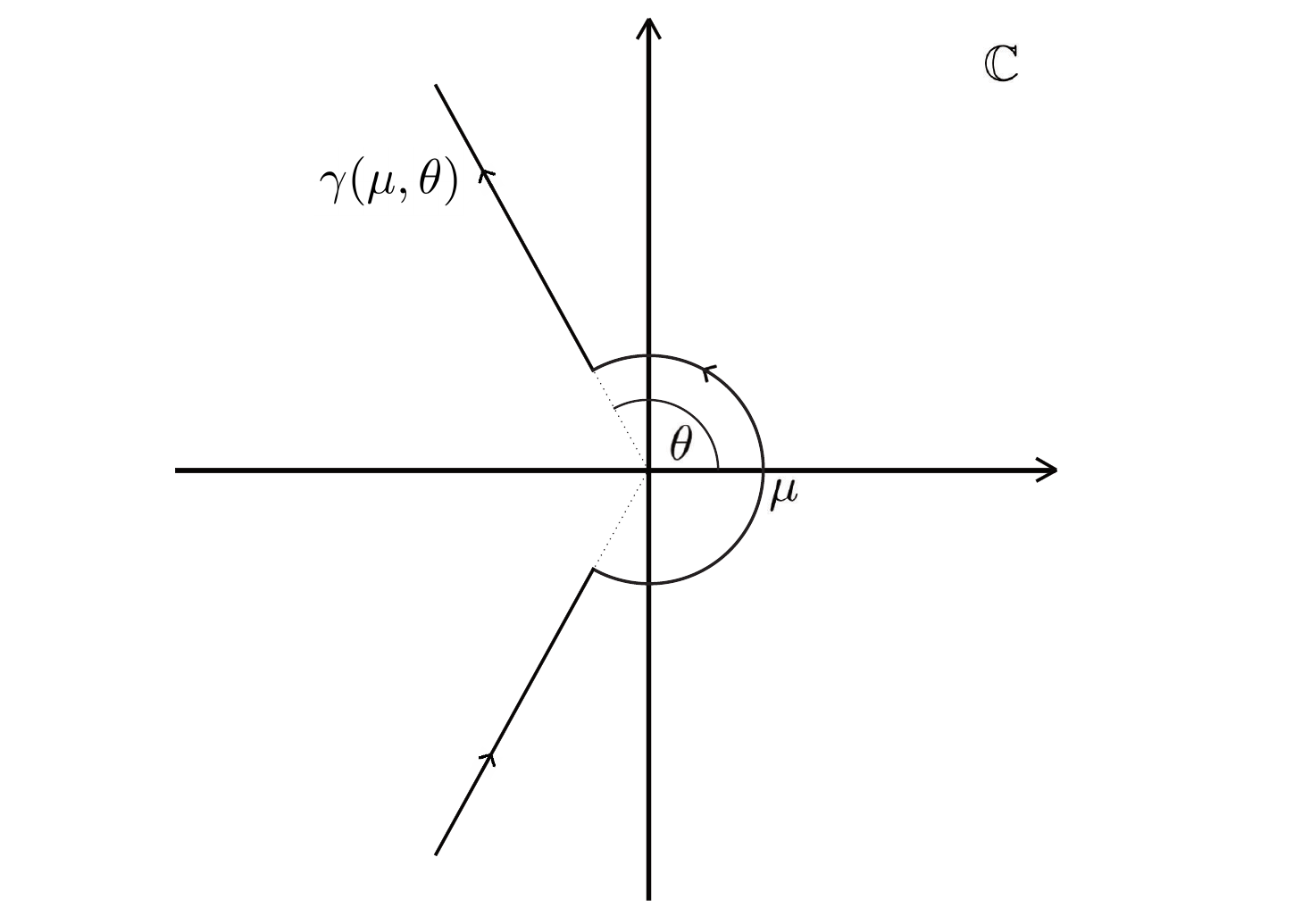}
\caption{The contour $\gamma(\mu,\theta)$.}
\end{figure}
see Figure 2. By Lemma \ref{zero_distr}(iii), there exists a $\delta>0$ such that the function $s^\alpha-a-b \exp{(-\tau s)}$ has no zeros $s_i$ with
$|\arg{(s_i)}|= \frac{\pi}{2}+\delta$, and there are only finitely many zeros $s_i$ which satisfy $|\arg{(s_i)}|\leq \frac{\pi}{2}+\delta$. Hence, there exist $R>0$ such that all zeros $s_i$ lie to the left of $\gamma(R,\frac{\pi}{2}+\delta)$. Due to the fact that $0$ is not a root of $Q(s)=0$, we can find $0<\varepsilon<R$ such that $Q(s)=0$ has no solutions inside and on the circle $\{z\in \C:|z|=\varepsilon\}$. For $t\geq 1$, from \cite[p.\ 346]{Cermak_2}, we have
\begin{align*}
E_{\alpha,\beta}^{a,b,\tau}(t)&=\frac{1}{2\pi i}\int_{\gamma(R,\frac{\pi}{2}+\delta)}\frac{s^{\alpha-\beta}\exp{(ts)}}{s^\alpha-a- b\exp{(-\tau s)}}\;ds\\
&=I^1(t)+I^2(t),
\end{align*}
where
\[
I^1(t)=\frac{1}{2\pi i}\int_{\gamma(\frac{\varepsilon}{t},\frac{\pi}{2}+\delta)}\frac{s^{\alpha-\beta}\exp{(ts)}}{s^\alpha-a-b \exp{(-\tau s)}}\;ds,
\]
and
\[
I^2(t)=\frac{1}{2\pi i}\int_{\substack{\gamma(R,\frac{\pi}{2}+\delta)-\gamma(\frac{\varepsilon}{t},\frac{\pi}{2}+\delta)}}\frac{s^{\alpha-\beta}\exp{(ts)}}{s^\alpha-a-b\exp{(-\tau s)}}\;ds.
\]
If there are no solutions of $Q(s)=0$ in the domain bounded by $\gamma(R,\frac{\pi}{2}+\delta)-\gamma(\varepsilon,\frac{\pi}{2}+\delta)$, then 
\begin{equation}\label{i_1}
I^2(t)=0 \quad \text{for all } t\geq 1.
\end{equation}
Now assume that the roots of $Q(s)=0$ in the domain bounded by $\gamma(R,\frac{\pi}{2}+\delta)-\gamma(\varepsilon,\frac{\pi}{2}+\delta)$ are $s_1,\dots,s_N$. Using the Cauchy residue theorem (see e.g., \cite[Theorem 6.16, p.\ 347]{Zill}), we have
\[
I^2(t)=\sum_{i=1}^N {\textup{Res}}_{s=s_i}\left(\frac{s^{\alpha-\beta}\exp{(st)}}{s^{\alpha}-a-b\exp{(-\tau s)}}\right)
\quad\text{for all } t \geq 1.
\]
From the proof of \cite[Lemma 2]{Cermak_2} we deduce that
\begin{equation}\label{i_2}
I^2(t)=\sum_{i=1}^N \left(a^i_{-1}+a^i_{-2} t+\frac{a^i_{-3}t^2}{2}\right)\exp{(s_it)}
\quad\text{for all } t \geq 1,
\end{equation}
where $N$, $a^i_{-1},a^i_{-2},a^i_{-3}$, $i=1,\dots,N$, are independent of $t$. Note that $\Re (s_i) < 0$ for $i=1,\dots,N$ and the function $\exp{(s_i t)}$ converges exponentially to $0$ as $t \to \infty$. 

\noindent (i) Let $\beta=1$. For the term $I^1(t)$, by the change of variables $s=\frac{u^{1/\alpha}}{t}$, we have
\[
I^1(t)=\frac{t^{\beta-1}}{2\alpha \pi i}\int_{\gamma(\varepsilon^\alpha,\frac{\alpha \pi}{2}+\alpha \delta)}\frac{u^{\frac{1-\beta}{\alpha}}\exp{[(1+\tau/t)u^{1/\alpha}]}}{(u-at^\alpha)\exp{(\frac{\tau u^{1/\alpha}}{t})}-bt^\alpha}du\quad \text{for all } t\geq 1.
\]
Set $D\coloneqq \{z\in \C \colon |z|\leq \varepsilon,\; |\arg(z)|\leq \pi/2+\delta\}$ and $\nu \coloneqq \min_{s\in \gamma(\varepsilon,\pi/2+\delta)\cup D}|s^{\alpha}-a-b\exp{(-\tau s)}|$. Due to $|s^{\alpha}-a-b\exp{(-\tau s)}|\geq \nu$ for all $s\in \gamma(\varepsilon/t,\pi/2+\delta)$ and $t\geq 1$, we have
\begin{equation*}
|(u-at^\alpha)\exp{(\tfrac{\tau u^{1/\alpha}}{t})}-bt^\alpha|\geq \nu t^\alpha |\exp{(\tfrac{\tau u^{1/\alpha}}{t})}|
\end{equation*}
for all $u\in \gamma(\varepsilon^\alpha, \frac{\alpha \pi}{2}+\alpha \delta)$ and $t\geq 1$. This implies that
\[
|I^1(t)|\leq \frac{1}{2\alpha \pi \nu}\int_{\gamma(\varepsilon^\alpha, \frac{\alpha \pi}{2}+\alpha \delta)}|\exp{(u^{1/\alpha})}|\;|du|\; \frac{1}{t^\alpha}
\]
for all $t\geq 1$, which together with \eqref{i_1} and \eqref{i_2} completes the proof of this part.

\noindent (ii) Consider $\beta=\alpha$. For all $t\geq 1$, we have
\begin{align}
\notag I^1(t)&=-\frac{1}{b}\frac{1}{2\pi i}\int_{\gamma(\frac{\varepsilon}{t},\frac{\pi}{2}+\delta)}\exp{((t+\tau)s)}\;ds\\
\notag &\quad{}+\frac{1}{b}\frac{1}{2\pi i}\int_{\gamma(\frac{\varepsilon}{t},\frac{\pi}{2}+\delta)}\frac{(s^\alpha-a)\exp{((t+\tau)s)}}{s^\alpha-a-b\exp{(-\tau s)}}\;ds\\
&=-\frac{1}{b}J_1(t)+\frac{1}{b}J_2(t)-\frac{a}{b}J_3(t),\label{ii_0}
\end{align}
where, using \cite[Formula (1.52), p.\ 16]{Podlubny},
\begin{align}
\notag J_1(t)&=\frac{1}{2\pi i}\int_{\gamma(\frac{\varepsilon}{t},\frac{\pi}{2}+\delta)}\exp{((t+\tau)s)}\;ds\\
\notag &=\frac{1}{2\alpha \pi i}\int_{\gamma((1+\tau/t)^\alpha\varepsilon^\alpha,\frac{\alpha \pi}{2}+\alpha \delta)}u^{(1-\alpha)/\alpha}\exp{(u^{1/\alpha})}\;du\;\frac{1}{t+\tau}\\
\notag&=\left(\frac{1}{\Gamma(z)}\right)_{|z=0}\;\frac{1}{t+\tau}\\
&=0,\label{ii_1}
\end{align}
\begin{align}
\notag|J_2(t)|&=\left|\frac{1}{2\pi i}\int_{\gamma(\frac{\varepsilon}{t},\frac{\pi}{2}+\delta)}\frac{s^\alpha\exp{((t+\tau)s)}}{s^\alpha-a-b\exp{(-\tau s)}}\;ds\right|\\
&\leq \frac{1}{\nu_1 \alpha \pi}\int_{0}^\infty r^{1/\alpha}\exp{(-r^{1/\alpha}\sin\delta)}\;dr\;\frac{1}{t^{\alpha+1}}\label{ii_2}
\end{align}
with $\nu_1=\inf_{s\in \gamma(0,\frac{\pi}{2}+\delta)}|s^\alpha-a-b\exp{(-\tau s)}|$,
and
\begin{align*}
J_3(t)&=I^1(t)+\frac{1}{2\pi i}\int_{\gamma(\frac{\varepsilon}{t},\frac{\pi}{2}+\delta)}\frac{\exp{(ts)}[\exp{(\tau s)}-1]}{s^\alpha-a-b\exp{(-\tau s)}}\;ds\\
&=I^1(t)+G(t).
\end{align*}
Note that
\begin{align}
\notag |G(t)|&=\left| \frac{1}{2\pi i}\int_{\gamma(0,\frac{\pi}{2}+\delta)}\frac{\exp{(ts)}[\exp{(\tau s)}-1]}{s^\alpha-a-b\exp{(-\tau s)}}\;ds\right|\\
\notag &=\left|\frac{1}{2\alpha\pi i}\int_{\gamma(0,\frac{\alpha\pi}{2}+\alpha\delta)}\frac{\exp{(u^{1/\alpha})}[\exp{(\tau u^{1/\alpha}/t)}-1]}{u/{t^\alpha}-a-b\exp{(-\tau u^{1/\alpha}/t)}}u^{1/{\alpha}-1}\;ds\;\frac{1}{t}\right|\\
&\leq \frac{\tau}{\nu_1 \alpha \pi}\int_0^\infty r^{(2-\alpha)/\alpha}\exp{(-r^{1/\alpha}\sin \delta)}\;dr\;\frac{1}{t^2},\label{ii_3}
\end{align}
for all $t\geq 1$, where  for $z\in \gamma(0,\frac{\pi}{2}+\delta)$, we used the inequality
\[
|\exp{(z)}-1|\leq |z|.
\]
On the other hand, from \eqref{ii_0} we have
\[
I^1(t)=-\frac{1}{a+b}J_1(t)+\frac{1}{a+b}J_2-\frac{a}{a+b}G(t),
\]
which together with \eqref{ii_1}, \eqref{ii_2} and \eqref{ii_3} shows that
\begin{align*}
|I^1(t)|&\leq \frac{1}{|a+b|\nu_1 \alpha \pi}\int_0^\infty r^{1/\alpha}\exp{(-r^{1/\alpha}\sin\delta)}dr\;\frac{1}{t^{\alpha+1}}\\
&\quad{}+\frac{|a|\tau}{|a+b|\nu_1\alpha \pi}\int_0^\infty r^{(2-\alpha)/\alpha}\exp{(-r^{1/\alpha}\sin\delta)}dr\;\frac{1}{t^2}
\end{align*}
for $t\geq 1$. This combines with \eqref{i_1} and \eqref{i_2} to complete the proof of this part.

\noindent (iii) First we consider $t\in [0,1]$. For $R>0$ and $\delta>0$ chosen as above, we split the contour $\gamma(R,\frac{\pi}{2}+\delta)$ into three parts: $\gamma(R,\frac{\pi}{2}+\delta)=\gamma_1(R,\frac{\pi}{2}+\delta)\cup\gamma_2(R,\frac{\pi}{2}+\delta)\cup\gamma_3(R,\frac{\pi}{2}+\delta)$, where
\begin{align*}
   \gamma_1(R,\tfrac{\pi}{2}+\delta)
   & \coloneqq
   \left\{r(\cos\varphi+i\sin\varphi) \in \C \colon R\leq r<\infty,\; \varphi=\tfrac{\pi}{2}+\delta\right\},
\\
   \gamma_3(R,\frac{\pi}{2}+\delta)
   & \coloneqq
   \left\{r(\cos\varphi+i\sin\varphi) \in \C \colon R\leq r\leq \infty,\; \varphi=-(\tfrac{\pi}{2}+\delta)\right\},
\\ \intertext{and}
   \gamma_2(R,\tfrac{\pi}{2}+\delta)
   & \coloneqq
   \left\{R(\cos\varphi+i\sin\varphi) \in \C \colon -(\tfrac{\pi}{2}+\delta)\leq \varphi\leq \tfrac{\pi}{2}+\delta\right\}.
\end{align*}

Taking $R_1>R$ such that $|s^\alpha-a-b\exp{(-\tau s)}|>\frac{|b\exp{(-\tau s)}|}{2}$ for all $s=r\exp{(i\varphi)}\in \gamma_i(R,\frac{\pi}{2}+\delta),\; r\geq R_1$, $i=1$ or $i=3$. On $\gamma_i(R,\frac{\pi}{2}+\delta)$, we obtain the estimate
\begin{align}\label{e_1}
&\notag \quad\left|\int_{\gamma_i(R,\frac{\pi}{2}+\delta)}\frac{\exp{(st)}}{s^\alpha-a-b\exp{(-\tau s)}}ds\right|\\
\notag&\leq \left|\int_{\{s=r\exp{(i\varphi)}\in\gamma_i(R,\frac{\pi}{2}+\delta):R\leq r\leq R_1\}}\frac{\exp{(st)}}{s^\alpha-a-b\exp{(-\tau s)}}ds\right|\\
\notag&\quad{}+\left|\int_{\{s=r\exp{(i\varphi)}\in\gamma_i(R,\frac{\pi}{2}+\delta):r\geq R_1\}}\frac{\exp{(st)}}{s^\alpha-a-b\exp{(-\tau s)}}ds\right|\\
&\leq \frac{R_1-R}{\eta}+\frac{2}{|b|\tau R_1\sin\delta}
\end{align}
for all $t\in [0,1]$, and $i=1$ or $i=3$, where $\eta:=\inf_{s\in \gamma(R,\frac{\pi}{2}+\delta)}\{|s^\alpha-a-b\exp{(-\tau s)}|\}$. Moreover, on $\gamma_2(R,\frac{\pi}{2}+\delta)$, we have
\begin{equation}\label{e_2}
\left|\int_{\gamma_2(R,\frac{\pi}{2}+\delta)}\frac{\exp{(ts)}}{s^\alpha-a-b\exp{(-\tau s)}}ds\right|\leq \frac{2\pi R \exp{(R)}}{\eta},
\end{equation}
for all $t\in [0,1]$. Combining \eqref{e_1} and \eqref{e_2} leads to the estimate
\begin{equation}\label{e_3}
\int_0^t |E_{\alpha,\alpha}^{a,b,\tau}(s)|ds\leq \frac{R_1-R}{\pi \eta}+\frac{2}{\pi|b|\tau R_1\sin\delta}+\frac{ R \exp{(R)}}{\eta}
\end{equation}
for all $t\in [0,1]$.

To complete the proof of this part, we will show that the statement also holds for $t>1$. Using \eqref{e_3} and Lemma \ref{est_lemma}$\textup{(ii)}$, there exists a constant $C_3>0$ such that the following estimate holds
\begin{align}
\notag \int_0^t |E_{\alpha,\alpha}^{a,b,\tau}(s)|ds&\leq \int_0^1 |E_{\alpha,\alpha}^{a,b,\tau}(s)|ds+\int_1^t |E_{\alpha,\alpha}^{a,b,\tau}(s)|ds\\
\notag&\leq \frac{R_1-R}{\pi\eta}+\frac{2}{\pi|b|\tau R_1\sin\delta}+\frac{ R \exp{(R)}}{\eta}+\int_1^t\frac{C_3}{s^{\alpha+1}}ds\\
\notag&\leq \frac{R_1-R}{\pi\eta}+\frac{2}{\pi|b|\tau R_1\sin\delta}+\frac{ R \exp{(R)}}{\eta}+\frac{C_3}{\alpha}.
\end{align}
Thus, there exists $C>0$ such that
\[
   \int_0^\infty |E^{a,b,\tau}_{\alpha,\alpha}(s)|ds=\sup_{t\geq 0}\int_0^t |E^{a,b,\tau}_{\alpha,\alpha}(s)|ds\leq C.
   \qedhere
\]
\end{proof}

\begin{remark}
In \cite[Lemma 2, p.\ 346]{Cermak_2}, the authors also studied the asymptotic behavior of the generalized Mittag-Leffler function $E_{\alpha,\beta}^{a,b,\tau}(t)$ for $\beta=1$ and $\beta=\alpha$.  The key point in the proof of this result is to estimate the quantities $\omega_{(1-\beta)/\alpha+1,3}(t)$ and $\omega_{(1-\beta)/\alpha,3}(t)$, see \cite[l.\ 18, p.\ 347]{Cermak_2}. Those estimates are based on \cite[Proposition 5(ii)]{Cermak_2}. However, there is a gap in the proof of \cite[Proposition 5(ii)]{Cermak_2}. Indeed, they first give the following inequality as $t\to\infty$
\begin{equation}\label{r_1}
|\omega_{x,n}(t)|\leq \frac{C_x}{t^\alpha},
\end{equation}
where $x>-1$, $n=2,3,\dots$, and $$C_x=\frac{1}{2\pi\alpha \eta_0}\left(\alpha(\pi+2\delta)+\frac{\alpha\Gamma(\alpha(x+1))}{(\sin\delta)^{\alpha(x+1)}}\right),$$
see \cite[l.\ 18, p.\ 345]{Cermak_2}. Then, they use the representation
\begin{equation}\label{r_2}
\omega_{x,n+m}(t)=\omega_{x,n}(t)+\sum_{j=1}^{\infty}\frac{(m\tau)^j}{j!t^j}\omega_{j/\alpha+x,n}(t),
\end{equation}
where $m\in \Z^+$ is arbitrary. Finally, they apply \eqref{r_1} for the term $\omega_{j/\alpha+x,n}(t)$ in \eqref{r_2} to show that 
\[
\sum_{j=1}^{\infty}\frac{(m\tau)^j}{j!t^j}\omega_{j/\alpha+x,n}(t)=\mathcal{O}(t^{-\alpha-1})\quad\text{as}\; t\to\infty.
\]
In our opinion, this argument maybe not true due to the fact that the coefficients $C_{j/\alpha+x}$ in the estimate for $\omega_{j/\alpha+x,n}(t)$ (by using \eqref{r_1} as above) are not bounded as $j\to \infty$.
\end{remark}

\section{Asymptotic stability}

Our aim in this section is to prove the following theorem.

\begin{theorem}[Stability of scalar nonlinear fractional differential equation with linearly dominated delay]\label{Main result}
 Let $\tau>0$, $a,b\in \R$ with $a\leq b<-a$ and $f$ satisfy $\textup{(H1)}$ and $\textup{(H2)}$. Then, the trivial solution of 
 the initial value problem \eqref{IntroEq}, \eqref{Ini_Cond}, is asymptotically stable.
\end{theorem}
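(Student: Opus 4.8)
The plan is to reduce the problem to the globally Lipschitz setting by truncating $f$, and then to feed the variation of constants formula \eqref{var_const_for} into the Mittag-Leffler estimates of Lemma~\ref{est_lemma}: first to obtain a uniform a priori bound (stability), then decay to zero (attractivity). Throughout, $C$ denotes the constant from Lemma~\ref{est_lemma}. First I would fix $r>0$ and replace $f$ by the truncation $f_r(x,y)\coloneqq f(\pi_r(x),\pi_r(y))$, where $\pi_r\colon\R\to[-r,r]$ is the $1$-Lipschitz radial retraction onto $[-r,r]$. Then $f_r(0,0)=0$, $f_r$ agrees with $f$ on $[-r,r]^2$, and $f_r$ is globally Lipschitz with constant $\ell_f(r)$, so in particular $|f_r(x,y)|\le\ell_f(r)\max\{|x|,|y|\}$ for all $(x,y)\in\R^2$; by (H2) I may choose $r$ so small that $\ell_f(r)\,C<1$. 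Let $\varphi_r(\cdot,\phi)$ denote the unique global solution, with initial datum $\phi$, of the equation obtained from \eqref{IntroEq} by replacing $f$ with $f_r$; by the variation of constants lemma of Section~2 it satisfies \eqref{var_const_for} with $f_r$ in place of $f$. A standard continuation argument shows that as long as $\varphi(\cdot,\phi)$ exists and stays in $B_\R(0,r)$ it also solves the truncated equation, hence coincides with $\varphi_r(\cdot,\phi)$ there by uniqueness; consequently, any bound $\sup_{t}|\varphi_r(t,\phi)|<r$ forces $t_{\max}(\phi)=\infty$ and $\varphi(\cdot,\phi)\equiv\varphi_r(\cdot,\phi)$.

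For stability I would estimate \eqref{var_const_for} termwise. Set $K_1\coloneqq\sup_{t\ge0}|E^{a,b,\tau}_{\alpha,1}(t)|$, which is finite since $E^{a,b,\tau}_{\alpha,1}$ is continuous on $[0,1]$ and is bounded by $C$ on $[1,\infty)$ by Lemma~\ref{est_lemma}(i). For fixed $T>0$, writing $M(T)\coloneqq\sup_{t\in[-\tau,T]}|\varphi_r(t,\phi)|$ (finite by continuity), the first term of \eqref{var_const_for} is bounded by $K_1\|\phi\|_\infty$; the history term, after the substitution $u=t-\tau-s$ and Lemma~\ref{est_lemma}(iii), by $|b|\,C\,\|\phi\|_\infty$; and the nonlinear term, using $|f_r(x,y)|\le\ell_f(r)\max\{|x|,|y|\}$ and Lemma~\ref{est_lemma}(iii), by $\ell_f(r)\,C\,M(T)$. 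Combining these with $|\varphi_r(t,\phi)|\le\|\phi\|_\infty$ on $[-\tau,0]$ yields $M(T)\le\max\{1,K_1+|b|C\}\,\|\phi\|_\infty+\ell_f(r)\,C\,M(T)$; since $M(T)<\infty$ and $\ell_f(r)\,C<1$, rearranging and taking $T\to\infty$ gives $\sup_{t\ge-\tau}|\varphi_r(t,\phi)|\le K\|\phi\|_\infty$ with $K\coloneqq\max\{1,K_1+|b|C\}/(1-\ell_f(r)\,C)$. Given $\varepsilon>0$, the choice $\delta\coloneqq\min\{\varepsilon,r\}/(2K)$ then makes $\sup_{t}|\varphi_r(t,\phi)|<r$ whenever $\|\phi\|_\infty\le\delta$, so by the previous paragraph $t_{\max}(\phi)=\infty$ and $|\varphi(t,\phi)|\le\varepsilon$ for all $t\ge0$; this is stability.

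For attractivity, fix $\phi$ with $\|\phi\|_\infty\le r/(2K)$, so $\varphi(\cdot,\phi)=\varphi_r(\cdot,\phi)$ and $B\coloneqq\sup_{t}|\varphi_r(t,\phi)|<\infty$; I would show $\lambda\coloneqq\limsup_{t\to\infty}|\varphi_r(t,\phi)|=0$. Given $\sigma>0$, pick $T_0$ with $|\varphi_r(t,\phi)|\le\lambda+\sigma$ for all $t\ge T_0-\tau$. In \eqref{var_const_for}: the first term is $\mathcal{O}(t^{-\alpha})$ by Lemma~\ref{est_lemma}(i); the history term is $\mathcal{O}(t^{-\alpha-1})$ by Lemma~\ref{est_lemma}(ii), since its Mittag-Leffler argument tends to infinity; the piece $\int_0^{T_0}$ of the nonlinear term is $\mathcal{O}((t-T_0)^{-\alpha-1})$ by Lemma~\ref{est_lemma}(ii) together with $|f_r|\le\ell_f(r)B$; and the piece $\int_{T_0}^{t}$ of the nonlinear term is at most $\ell_f(r)(\lambda+\sigma)\int_0^{\infty}|E^{a,b,\tau}_{\alpha,\alpha}(u)|\,du\le\ell_f(r)\,C\,(\lambda+\sigma)$ by Lemma~\ref{est_lemma}(iii), because $|\varphi_r(s)|\le\lambda+\sigma$ and $|\varphi_r(s-\tau)|\le\lambda+\sigma$ for all $s\ge T_0$. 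Passing to $\limsup_{t\to\infty}$ and then letting $\sigma\to0$ gives $\lambda\le\ell_f(r)\,C\,\lambda$, hence $\lambda=0$ since $\ell_f(r)\,C<1$; therefore $\varphi(t,\phi)\to0$, and together with the stability above the trivial solution is asymptotically stable.

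The hard part will be organizational rather than analytic: the polynomial decay and integrability of the Mittag-Leffler functions are already provided by Lemma~\ref{est_lemma}, so what remains is to handle the delayed argument consistently in both the a priori estimate and the $\limsup$ argument, and to make the continuation/uniqueness step watertight so that the truncated solution and the original solution genuinely coincide on the relevant time interval. The smallness condition $\ell_f(r)\,C<1$ furnished by (H2) is exactly what makes the resulting contraction-type inequalities close.
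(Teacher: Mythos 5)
Your argument is correct and follows the same overall strategy as the paper: replace $f$ by a globally Lipschitz nonlinearity with Lipschitz constant $\ell_f(\cdot)$ small enough that $\ell_f\cdot C<1$, feed the variation of constants formula \eqref{var_const_for} into the estimates of Lemma~\ref{est_lemma} to get a uniform bound (stability), and then run a $\limsup$ argument with the integral split at a large time (attractivity). The differences are in execution rather than in route. Where the paper invokes an abstract Lipschitz extension theorem (Heinonen) to produce $F$, you build the modification explicitly as $f\circ(\pi_r,\pi_r)$ with the radial retraction; this is more elementary and hands you the pointwise bound $|f_r(x,y)|\le\ell_f(r)\max\{|x|,|y|\}$ directly. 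Where the paper sets up a Lyapunov--Perron operator on $B_{C_{\infty}}(0,\varepsilon)$ and applies the Banach fixed point theorem, you obtain the same conclusion from the absorbing a priori inequality $M(T)\le K'\|\phi\|_\infty+qM(T)$ with $q=\ell_f(r)C<1$; the two are equivalent (same contraction constant), and your version correctly supplies the one ingredient it needs, namely $M(T)<\infty$, from global existence for the truncated equation. Your continuation/uniqueness step, forcing $t_{\max}(\phi)=\infty$ and $\varphi\equiv\varphi_r$ from the strict bound $\sup_t|\varphi_r|<r$, is the same identification the paper makes at the end of its stability argument, stated more carefully. In the attractivity step you are in fact slightly more careful than the paper: by requiring $|\varphi_r(t)|\le\lambda+\sigma$ for all $t\ge T_0-\tau$ you control the delayed argument $\varphi_r(s-\tau)$ for $s\ge T_0$ as well, a point the paper glosses over when it bounds $f(\xi(s),\xi(s-\tau))$ for $s\ge T(\hat\varepsilon)$. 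I see no gaps.
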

\begin{proof}
From the assumption $\textup{(H2)}$, we have a constant $\varepsilon_0>0$ such that
\begin{equation*}\label{m_1}
q:=\ell_f(\varepsilon)\; C<1
\end{equation*}
for all $\varepsilon\in (0,\varepsilon_0)$, where $C$ is the constant chosen in Lemma \ref{est_lemma}. Let $\varepsilon>0$ (w.l.o.g.\  $\varepsilon\leq \varepsilon_0$) and choose $\delta>0$ satisfying
\[
\delta=\frac{(1-q)\varepsilon}{\sup_{t\geq 0}|E^{a,b,\tau}_{\alpha,1}(t)|+|b|\int_0^\infty|E^{a,b,\tau}_{\alpha,\alpha}(t)|dt+1}.
\]
Let $F \colon \R^2\to \R$ be a Lipschitz continuous function with Lipschitz constant $\ell_f(\varepsilon)$ and $F(x,y) = f(x,y)$ for all $(x,y)\in \R^2$ such that $\max\{|x|,|y|\}\leq \varepsilon$. Such a Lipschitz extension always exists, see e.g., \cite[Theorem 2.5]{Heinonen}. Consider the equation
\begin{equation}\label{m_2}
\capD^\alpha_{0+}x(t)=ax(t)+bx(t-\tau)+F(x(t),x(t-\tau)), \quad t\geq 0,
\end{equation}
with the initial condition $x(t)=\phi(t)$ for all $t\in [-\tau,0]$, where $\phi\in B_{C([-\tau,0];\R)}(0,\delta)$. From Lemma \ref{var_const_for}, we see that the unique solution $\hat\varphi(\cdot,\phi)$ of \eqref{m_2} has the representation
\begin{align*}
\hat\varphi(t,\phi)&=\phi(0)E^{a,b,\tau}_{\alpha,1}(t)+b\int_{-\tau}^{t-\tau} E^{a,b,\tau}_{\alpha,\alpha}(t-\tau-s)\tilde\phi(s)ds\\
&\hspace{0.5cm}+\int_0^t E^{a,b,\tau}_{\alpha,\alpha}(t-s)F(\hat\varphi(s),\hat\varphi(s-\tau))ds
\quad \text{for all } t\geq 0,
\end{align*}
and $\hat\varphi(t,\phi)=\phi(t)$ on $[-\tau,0]$.

Next, we introduce a Lyapunov--Perron operator on $C([-\tau,\infty);\R)$ as follows. Given any $\phi\in C([-\tau,0];\R)$, the operator $\mathcal{T}_{\phi,\tau}$ on $C([-\tau,\infty);\R)$ is defined by
\begin{align*}
&\mathcal{T}_{\phi,\tau}\xi(t)=\phi(0)E^{a,b,\tau}_{\alpha,1}(t)+b\int_{-\tau}^{t-\tau} E^{a,b,\tau}_{\alpha,\alpha}(t-\tau-s)\tilde\phi(s)d\tau\\
&\hspace{1.5cm}+\int_0^t E^{a,b,\tau}_{\alpha,\alpha}(t-s)F(\xi(s),\xi(s-\tau))ds \quad \text{for all } t\geq 0,\\
&\mathcal{T}_{\phi,\tau}\xi(t)=\phi(t) \quad \text{for all } t\in [-\tau,0].
\end{align*}
For $\phi\in B_{C([-\tau,0];\R)}(0,\delta)$, it is easy to see that for $\xi\in B_{C_{\infty}}(0,\varepsilon)$
\begin{align*}
&\|\mathcal T_{\phi, \tau}\xi\|_\infty \leq \;\Big(\sup_{t\geq 0}|E_{\alpha,1}^{\lambda_i,\tau}(t)|+|b|\int_0^\infty |E_{\alpha,\alpha}^{\lambda_i,\tau}(s)|\;ds+1\Big)\;\|\phi\|_\infty+C\; \ell_{f}(\varepsilon)\;\|\xi\|_{\infty}\\
&\hspace{1.5cm} \leq \;(1-q)\varepsilon+q\varepsilon
\\
&\hspace{1.5cm}=\varepsilon,
\end{align*}
which proves that $\mathcal T_{\phi,\tau}(B_{C_{\infty}}(0,\varepsilon))\subseteq B_{C_{\infty}}(0,\varepsilon)$, and
\begin{eqnarray*}
\|\mathcal T_{\phi,\tau}\xi-\mathcal T_{\phi,\tau}\widehat{\xi}\|_\infty &\leq&
C\; \ell_{f}(\varepsilon)\;\|\xi-\widehat{\xi}\|_\infty\\[1.5ex]
&\leq & q\|\xi-\widehat{\xi}\|_\infty \quad \text{for all } \xi,\hat\xi\in B_{C_{\infty}}(0,\varepsilon).
\end{eqnarray*}
By using the Banach fixed point theorem, we see that there exists a unique fixed point $\xi^*$ of $\mathcal T_{\phi,\tau}$ in $B_{C_{\infty}}(0,\varepsilon)$. The uniqueness of the solution to \eqref{m_2} implies that $\hat\varphi(t,\phi)=\xi^*(t)$ for all $t\in [-\tau,\infty)$. Thus, $\|\hat\varphi(\cdot,\phi)\|_\infty\leq \varepsilon$ and
\begin{align*}
\capD^\alpha_{0+}\hat{\varphi}(t,\phi)&=a\hat{\varphi}(t,\phi)+b\hat{\varphi}(t-\tau,\phi)+F(\hat{\varphi}(t,\phi),\hat{\varphi}(t-\tau,\phi))\\
&=a\hat{\varphi}(t,\phi)+b\hat{\varphi}(t-\tau,\phi)+f(\hat{\varphi}(t,\phi),\hat{\varphi}(t-\tau,\phi)) \quad \text{for all } t\geq 0,
\end{align*}
which implies that the trivial solution to \eqref{IntroEq} is stable. Finally, we will show that the trivial solution to \eqref{IntroEq} is attractive. Suppose that $\xi(t)$ is the solution of \eqref{IntroEq}, \eqref{Ini_Cond} which satisfies $\xi(t)=\phi(t)$ for every $t\in[-\tau,0]$, where $\phi\in B_{C([-\tau,0];\R)}(0,\delta)$. As shown above, we see that $\|\xi\|_\infty\le \varepsilon$. Let
$a \coloneqq \limsup_{t\to\infty}|\xi(t)|$, then $a\in [0,\varepsilon]$. Let $\hat\varepsilon > 0$ small enough. Then, there exists $T(\hat\varepsilon)>0$
such that
\[
|\xi(t)|\le a+\hat\varepsilon \qquad \textup{for all } t\ge T(\hat\varepsilon).
\]
According to Lemma \ref{est_lemma}, we obtain
\begin{itemize}
\item[(i)] $\lim_{t\to \infty}E_{\alpha,1}^{a,b,\tau}(t)=0$,
\item[(ii)] $\lim_{t\to\infty}\int_{-\tau}^{t-\tau}E_{\alpha,\alpha}^{a,b,\tau}(t-\tau-s)\hat{\phi}(s)\;ds=0$,
\item[(iii)]
\begin{eqnarray*}
&&\limsup_{t\to\infty}\left|\int_0^{T(\hat\varepsilon)}E_{\alpha,\alpha}^{a,b,\tau}(t-s)f(\xi(s),\xi(s-\tau))\;ds\right|\\[1.5ex]
&\le& \max_{t\in [0,T(\varepsilon)]}|f(\xi(t),\xi(t-\tau))|\limsup_{t\to \infty}\int_0^{T(\hat\varepsilon)}\frac{C}{(t-s)^{\alpha+1}}ds\\
&= &0.
\end{eqnarray*}
\end{itemize}
Therefore, from the fact that $\xi(t)=(\mathcal{T}_{\phi,\tau} \xi)(t)$, we have
\begin{eqnarray*}
\limsup_{t\to\infty}|\xi(t)| &=&
\limsup_{t\to\infty}\left|\int_{T(\hat\varepsilon)}^tE_{\alpha,\alpha}^{a,b,\tau}(t-s)f(\xi(s),\xi(t-\tau))ds\right|\\
&\le& \ell_f(\varepsilon)\; C\; (a+\hat\varepsilon),
\end{eqnarray*}
where we use the estimate
\begin{eqnarray*}
\left|\int_{T(\hat\varepsilon)}^tE_{\alpha,\alpha}^{a,b,\tau}(t-s)\;ds\right| &=&
\int_0^{\infty}|E_{\alpha,\alpha}^{a,b,\tau}(u)|\;du\\
&\le& C,
\end{eqnarray*}
see Lemma \ref{est_lemma}(iii), to obtain the inequality above. Thus,
\begin{align*}
a &\le \ell_f(\varepsilon) C (a+\hat\varepsilon).
\end{align*}
Letting $\hat\varepsilon\to 0$, we have
\[
a \le \ell_f(\varepsilon) C a.
\]
Due to the fact $\ell_f(\varepsilon) C<1$, we get that $a=0$ and the proof is complete.
\end{proof}
To complete this paper, we give an example to illustrate the main result.

\begin{example}\label{ex1}
The fractional differential equation
\begin{equation}\label{ex}
\capD^{0.5}_{0+}x(t)=-5x(t)+0.5 x(t-1)+x^2(t)+x^3(t-1)
\end{equation}
is of the form \eqref{IntroEq} with $a=-5$, $b=0.5$, $f(x,y)=x^2+y^3$, and satisfies the assumptions of Theorem \ref{Main result}. Its trivial solution is therefore asymptotically stable. 

\begin{figure}[h!]
\centering
\includegraphics[width=1\textwidth]{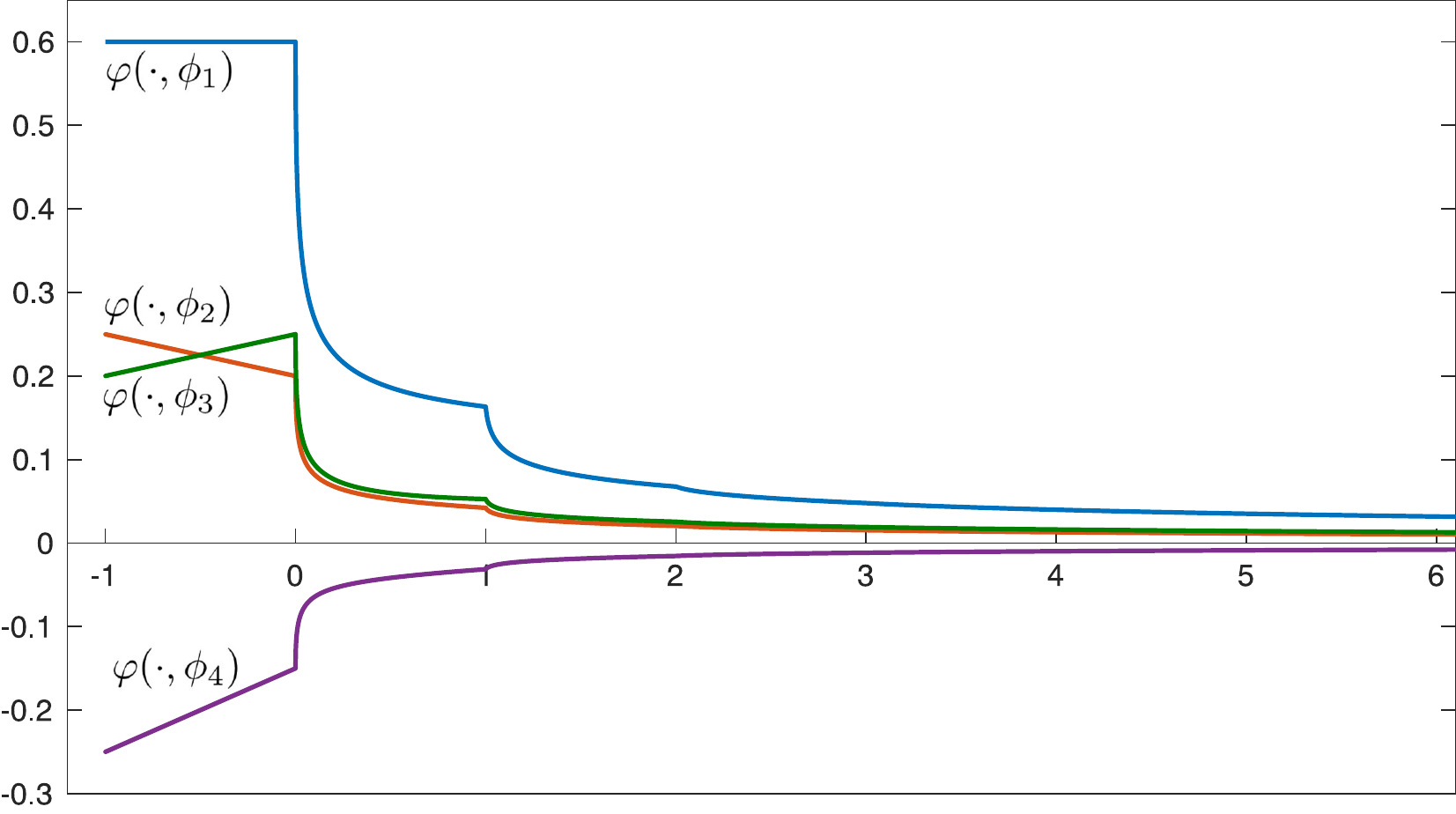}
\caption{The solutions $\varphi(\cdot,\phi_i)$, $i=1,2,3,4$, from Example \ref{ex1}.\label{figsolutions}}
\end{figure}
Using an Adams-Bashforth-Moulton predictor-corrector scheme for fractional differential equations \cite{BhalekarEtal2011, DFF2002},  solutions $\varphi(\cdot, \phi_i)$ to the equation \eqref{ex} are computed for the initial conditions $\varphi(t,\phi_i)=\phi_i(t)$ for $t\in [-1,0]$, $i=1, 2, 3, 4$, with the initial functions $\phi_i \colon [-1,0]\to\R$ defined by
\begin{align*}
   \phi_1(t) &= 0.6,
\\
   \phi_2(t) &= - 0.05 t +0.2,
\\
   \phi_3(t) &= 0.05 t + 0.25,
\\
   \phi_4(t) &= 0.1 t - 0.15,
\end{align*}
see Figure \ref{figsolutions}.
\end{example}
\section*{Acknowledgement}
The research of Hoang The Tuan was supported by the bilateral project between FWO Flanders and NAFOSTED Vietnam (FWO.101.2017.01). This paper was done when he visited the Center for Dynamics at TU Dresden, Germany, with the support of Deutscher Akademischer Austauschdienst (DAAD). The authors thank Ninh Van Thu and Hieu Trinh for helpful discussions.

\end{document}